\newtheorem{claim}{Claim}
\newtheorem{lemma}{Lemma}
\theoremstyle{definition}
\newtheorem{definition}{Definition}
\newtheorem{remark}{Remark}
\theoremstyle{plain}
\newtheorem*{kt}{Generalized Koiso's Theorem}
\newcommand{\otoprule}{\midrule[\heavyrulewidth]}
\newcommand{\vt}{\vspace{.1cm}}
\newcommand{\vtt}{\vspace{.2cm}}
\newcommand{\R}{\mathbb{R} }
\newcommand{\q}{\mathbb{Q} }
\newcommand{\h}{\mathbb{H}}
\newcommand{\s}{\mathbb{S}}
\renewcommand{\rho}{\varrho}
\renewcommand{\theta}{\varTheta}
\renewcommand{\Theta}{\varTheta}
\renewcommand{\Sigma}{\varSigma}
\renewcommand{\Omega}{\varOmega}
\renewcommand{\Lambda}{\varLambda}
\renewcommand{\tau}{\uptau}
\newcommand{\overbar}[1]{\mkern 1.5mu\overline{\mkern-1.5mu#1\mkern-1.5mu}\mkern 1.5mu}
\newcommand{\qr}{\q_\epsilon^n\times\R}
\newcommand{\tpitchfork}{%
  \vbox{
    \baselineskip\z@skip
    \lineskip-.52ex
    \lineskiplimit\maxdimen
    \m@th
    \ialign{##\crcr\hidewidth\smash{$-$}\hidewidth\crcr$\pitchfork$\crcr}
  }%
}
\begin{document}

\title[Stability and  Isoperimetry  of CMC Spheres]
{On Stability and Isoperimetry of Constant Mean Curvature
Spheres  of $\mathbb H^n\times\mathbb R$ and $\mathbb S^n\times\mathbb R.$}
\author{R. F. de Lima, M. F. Elbert \and B. Nelli}
\address[A1]{Depto. de Matem\'atica - Univ. Federal do Rio Grande do Norte - RN, Brazil.}
\email{ronaldo.freire@ufrn.br}
\address[A2]{Instituto de Matemática - Universidade Federal do Rio de Janeiro - RJ, Brazil.}
\email{fernanda@im.ufrj.br}
\address[A3]{Dipartimento di Ingegneria e Scienze dell'Informazione e Matematica, Universitá dell’Aquila, Italy.}
\email{barbara.nelli@univaq.it}

\maketitle

\begin{abstract}
  We approach the one-parameter family of rotational constant mean curvature (CMC)  spheres
  of $\mathbb H^n\times\mathbb R$  and $\mathbb S^n\times\mathbb R$ focusing on their
  stability and isoperimetry properties.
  We prove that  all rotational CMC spheres of $\mathbb H^n\times\mathbb R$ are stable,
  and that the ones in $\mathbb S^n\times\mathbb R$ with sufficiently small (resp.~large)
  mean curvature are unstable (resp.~stable). We also show that there exists
  a one-parameter family of stable CMC rotational spheres in $\mathbb S^n\times\mathbb R$
  which are not isoperimetric (i.e., they do not bound isoperimetric regions).
  We  establish the uniqueness of the regions enclosed by the rotational
  CMC spheres of $\mathbb H^n\times\mathbb R$ as solutions
  to the isoperimetric problem, filling in a gap in the original
  proof given by Hsiang and Hsiang. We establish, as well,
  a sharp upper bound for the volume of the spherical regions of $\mathbb S^n\times\mathbb R$
  which are unique solutions to the isoperimetric problem.
  In essence, all these results come from the fact that the rotational CMC spheres of
  $\mathbb H^n\times\mathbb R$, and those of $\mathbb S^n\times\mathbb R$
  with sufficiently large mean curvature, are nested.

  \vspace{.15cm}
  \noindent{\it 2020 Mathematics Subject Classification:} 53A10 (primary), 53B20 (secondary).
  \vspace{.1cm}
  \noindent{\it Key words and phrases:} mean curvature  -- stability --  isoperimetric problem.
\end{abstract}

\section{Introduction}

Compact hypersurfaces of constant mean curvature (CMC)
in  Riemannian manifolds are known to be critical points of the area
functional $\mathcal A$
for  volume-preserving variations.
A CMC hypersurface is then called
\emph{stable} if it minimizes $\mathcal A$ up to second order for
any such variation. In simply connected space forms, as established
in~\cite{barbosaetal}, geodesic spheres are the only compact stable
hypersurfaces.

Souam~\cite{souam} considered the rotational CMC spheres of
$\h^2\times\R$,  previously obtained by
Hsiang and Hsiang~\cite{hsiang},
proving that they are all stable, and that those with
(nonnormalized) constant mean curvature $H\ge\sqrt2$
are unique with respect to this property.
He considered the rotational CMC spheres  of $\s^2\times\R$ as well, constructed by
Pedrosa in~\cite{pedrosa}, and showed that such a sphere is stable if and only if its
mean curvature is greater than, or equal to, a constant which is approximately $0.36$.
Furthermore, any compact stable CMC (non-minimal) surface of $\s^2\times\R$ is necessarily a rotational sphere.

Stability is a concept closed related to isoperimetry. Recall that
an embedded compact hypersurface $\Sigma$ of a Riemannian manifold $\mathbb M^{n+1}$ is called
\emph{isoperimetric} if the compact region $\Omega\subset\mathbb M^{n+1}$ having $\Sigma$ as boundary is
the solution to the isoperimetric problem for prescribed volume $\mathcal V:={\rm Vol}(\Omega).$
In this case, we  call $\Omega$ an \emph{isoperimetric region}.
This means that, among all regions of $\mathbb M^{n+1}$ of volume $\mathcal V,$ $\Omega$ is the one
whose boundary has least area, where by \emph{volume} and \emph{area} we mean
the $(n+1)$-dimensional and $n$-dimensional Riemannian measures, respectively.
A standard result, then, establishes that
any $C^2$-smooth isoperimetric hypersurface is necessarily CMC and stable.
However, there exist stable embedded  CMC hypersurfaces which are not isoperimetric, such as
the  geodesic spheres of large radius in the real projective space (cf.~\cite[Appendix]{barbosaetal}).

In~\cite{hsiang}, Hsiang and Hsiang showed that,
for any $H>n-1\ge 1,$ there exists
an embedded rotational  sphere $\Sigma_H$
of constant (nonnormalized) mean curvature $H$ in $\h^n\times\R$
which is necessarily isoperimetric.
In addition,  by means of Alexandrov reflections, they established that
any compact embedded hypersurface in $\h^n\times\R$
of constant mean curvature $H>0$ is  congruent to
a rotational sphere $\Sigma_H.$ In particular, $H$ must be larger than $n-1.$
In fact, by the tangency principle, the mean curvature
of any compact immersed CMC hypersurface of $\h^n\times\R$ must be larger than
$n-1$, since there exists an entire vertical graph in $\h^n\times\R$ of constant mean curvature
$H$ for any $H\in(0,n-1]$ (cf.~\cite{delima-manfio-santos,elbert-earp}).

It is also shown in~\cite{hsiang} that the  spherical region $\Omega_H$
with boundary $\Sigma_H$ is the only solution to the isoperimetric problem for
prescribed volume $\mathcal V={\rm Vol}(\Omega_H).$
However, the proof the authors provide is incorrect (cf.~Remark~\ref{rem-flawed}
in Section~\ref{sec-proofs}).

Similar results regarding the isoperimetric problem in
$\s^n\times\R$ were obtained by Pedrosa~\cite{pedrosa}.
He proved that there exists an embedded  rotational  sphere
$\Sigma_H$ of constant mean curvature $H$ in $\s^n\times\R$ for any given $H>0,$
and that a solution to the isoperimetric problem is either the spherical region
$\Omega_H$ bounded by a rotational sphere
$\Sigma_H$ or a cylindrical section $\s^n\times[a,b].$
In addition, if $H$ is sufficiently large,
$\Omega_H$ is the only solution
to the isoperimetric problem in $\s^n\times\R$
for prescribed volume $\mathcal V={\rm Vol}(\Omega_H).$
In his work, Pedrosa also obtained a sharp lower bound
(approximately 0.66) for the mean curvature of an isoperimetric
sphere of $\s^2\times\R.$ Notice that, in what concerns stability, the lower bound of $H$
obtained by Souam is smaller (approximately 0.36). Therefore, any rotational
CMC sphere of $\s^2\times\R$ having mean curvature between these
two lower bounds is stable and non-isoperimetric.

In this paper, inspired by the aforementioned works,
we establish results on stability and isoperimetry of
CMC spheres of $\h^n\times\R$ and $\s^n\times\R.$
Our approach brings to light that, in this context,
the fundamental properties of the  CMC spheres are
intimately related with the \emph{nesting property}, which can be described as follows.

First, denote by $\q_\epsilon^n$ the simply
connected $n(\ge\hspace{-.1cm}2)$-space form of constant
sectional curvature $\epsilon\in\{-1,1\}$  (i.e., $\h^n=\q_{-1}^n$ and $\s^n=\q_{1}^n$).
Then, set $C_\epsilon:=n-1$ for $\epsilon=-1$ and $C_\epsilon:=0$ for $\epsilon=1,$ and
write $\mathscr F_\epsilon$ for the family of all noncongruent
rotational  spheres $\Sigma_H$ in $\qr$ of constant
mean curvature $H>C_\epsilon.$ Assume that all
spheres of $\mathscr F_\epsilon$ have the same rotational axis
$\{o\}\times\R\subset\qr$
and the same point $o\in\q_\epsilon^n\times\{0\}$
of minimal height. Finally, as above, denote by $\Omega_H$ the compact spherical region
of $\qr$ enclosed by $\Sigma_H.$
In this setting, for a given $C\ge C_\epsilon,$ we say that the
subfamily $\mathscr F_C:=\{\Sigma_H\,;\, H>C\}\subset\mathscr F_\epsilon$ is \emph{nested} if
$\Omega_H\subset\Omega_{H^*}$ whenever $H>H^*>C.$

Our first result, as stated below, describes the behavior of the
CMC spheres of the family $\mathscr F_\epsilon$ regarding the nesting property.
Its proof will be based on the fact that any sphere $\Sigma_H\in\mathscr F_\epsilon$ has
a horizontal hyperplane $P_H$ of symmetry, and that a subfamily
$\mathscr F_C\subset\mathscr F_\epsilon$ is nested if and only if the height of
$P_H$ is a decreasing function of $H$ for $H>C$  (see Fig.~\ref{fig-nestednotnested}).
\begin{figure}[ht]
\includegraphics[scale=1.2]{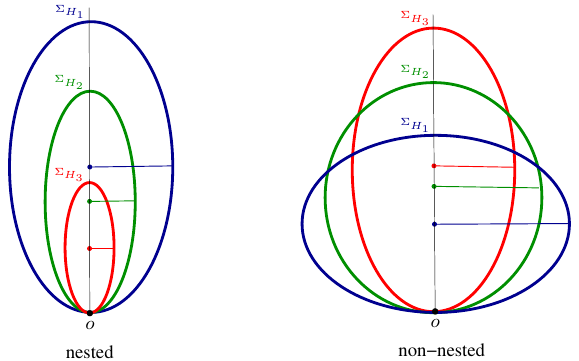}
\caption{\small Nested and non-nested families of rotational CMC spheres $\Sigma_{H_i},$ $i=1,2,3$,
of mean curvatures $H_1<H_2<H_3$. As the mean curvature increases from $H_1$ to $H_3,$
the height of the horizontal hyperplanes of symmetry (represented by the horizontal lines)
decreases in the nested case, and increases in the non-nested case.}
\label{fig-nestednotnested}
\end{figure}

\begin{restatable}{thm}{nested} \label{th-nestedspheres}
Let $\mathscr F_\epsilon:=\{\Sigma_H \,;\,  H>C_\epsilon\}$ be the family of
rotational CMC spheres of \,$\qr.$ Then, the following hold:
\begin{itemize}[parsep=1ex]
\item[\rm{\bf A)}] For $\epsilon=-1$, the whole family $\mathscr F_\epsilon$ is nested.
\item[\rm{\bf B)}] For $\epsilon=1,$ there exists $C_{\min}>C_\epsilon:=0$ such that $\mathscr F_{C}$ is
nested if and only if $C\ge C_{\min}.$ Furthermore, as $H$ decreases to $0$,
the height function of $\Sigma_H\in\mathscr F_\epsilon$
strictly decreases to $0,$ and $\Sigma_H$ converges uniformly to (a double copy of)
the totally geodesic horizontal hyperplane $\s^n\times\{0\}.$
\end{itemize}
\end{restatable}

Theorem~\ref{th-nestedspheres}, together with~\cite[Theorems 1 and 2]{hsiang}
and~\cite[Theorem 2]{pedrosa}, yields the following uniqueness result, which,
in the case of $\s^n\times\R,$ was previously obtained by
Pedrosa with a different proof (cf.~\cite[Theorem 5]{pedrosa}).

\begin{restatable}{thm}{uniquenessone}\label{th-uniquenessPI}
Given $\mathcal V_0>0$  (resp.~a sufficiently small $\mathcal V_0>0$),
there exists a spherical region $\Omega_{H_0}$ in \,$\h^n\times\R$ (resp.~$\s^n\times\R$)
which is, up to ambient isometries, the only solution to the isoperimetric problem
for prescribed volume $\mathcal V_0.$
\end{restatable}

Next, we apply  Theorem \ref{th-nestedspheres}
to establish  the following extension of the result by Souam we mentioned before;
namely~\cite[Theorem 2.2]{souam},
which is set in $\q_\epsilon^2\times\R.$
We remark that, in what concerns stability and isoperimetry of
rotational CMC spheres of $\s^n\times\R,$ our result
ensures the existence of one-parameter families of
all types: stable, unstable, isoperimetric, and stable non-isoperimetric.

\begin{restatable}{thm}{stability}  \label{th-stabilityH-spheres}
Let $\mathscr F_\epsilon:=\{\Sigma_H \,;\,  H>C_\epsilon\}$ be the family of
rotational CMC spheres of \,$\qr.$ Then, the following hold:
\begin{enumerate}[parsep=1ex]
\item[\rm{\bf A)}] For $\epsilon=-1,$ any sphere $\Sigma_H$ of $\mathscr F_\epsilon$ is stable.
\item[\rm{\bf B)}] For $\epsilon=1,$ there exist positive numbers {$H_0<H_1$} such that:
\begin{itemize}[parsep=1ex]
\item[\rm i)] $\Sigma_H$ is stable if $H\ge H_0.$
\item[\rm ii)]$\Sigma_H$ is unstable if $H<H_0$ is sufficiently close to $H_0.$
\item[\rm iii)] $\Sigma_H$ is stable and non-isoperimetric if $H_0\le H<H_1$.
\item[\rm iv)]$\Sigma_H$ is isoperimetric if $H\ge H_1$.
\end{itemize}
\end{enumerate}
\end{restatable}

The result in Theorem~\ref{th-stabilityH-spheres}-{\bf A} follows directly from the fact that
the rotational spheres of $\h^n\times\R$ are all isoperimetric. Nevertheless,
we considered relevant to give it an independent proof, as did Souam for the case $n=2.$

Regarding Theorem~\ref{th-stabilityH-spheres}-{\bf B}, we point out that
the stability of a rotational sphere $\Sigma_H$ of \,$\s^n\times\R$ depends on the
growth behavior of its area  $\mathcal A=\mathcal A(H)$
considered as a function of its mean curvature $H$ (this is also true for
the rotational spheres of $\h^n\times\R,$ but we will not use it here).

As we shall see, stable spheres are those for which $\mathcal A'(H)\le 0$,
whereas the unstable ones are those for which $\mathcal A'(H)>0$.
For $n=2,$  $\mathcal A$ can be expressed in terms
of elementary functions of $H$ (cf.~\cite[Section 4]{pedrosa}).
Using this expression, Souam concluded that $\mathcal A$ has only one
critical point $H_0$ (necessarily a maximum), which is precisely the
sharp lower bound  for the mean curvature of stable spheres of $\s^2\times\R$
he obtained.
For $n>2,$ instead,  $\mathcal A$ is defined  by means of an integral of elliptic type which
is not explicitly computable, making  a complete determination of the growth behavior of $\mathcal A$
nearly impossible. Yet, we believe that $\mathcal A$ has only one critical point, just as in the case $n=2$
(see Figure~\ref{fig-areagraph} in Section~\ref{sec-proofs}, which depicts
an illustrative  graph of $\mathcal A$ as a function
of the ``radiuses'' of the rotational spheres).

Our final result constitutes a refinement of Theorem~\ref{th-uniquenessPI}
in the case $\epsilon=1.$ It establishes a sharp upper bound for the volume of
the spherical regions $\Omega_H$ which are unique solutions to the isoperimetric
problem.

\begin{restatable}{thm}{uniqueness}\label{thm-uniquenessPI}
Given an integer $n\ge 2,$ there exists a positive constant $\overbar{\mathcal V}=\overbar{\mathcal V}(n)$
which, regarding
the isoperimetric problem for any prescribed volume $\mathcal V_0>0$ in $\s^n\times\R,$
has the following properties:
\begin{itemize}[parsep=1ex]
\item[\rm i)] If \,$\mathcal V_0<\overbar{\mathcal V}$, {any solution is necessarily a
spherical region $\Omega_H\subset\s^n\times\R$}.

\item[\rm ii)] If \,$\mathcal V_0>\overbar{\mathcal V}$, there exists a cylindrical section $\Omega_{[a,b]}:=\s^n\times[a,b]$ which is a solution.

\item[\rm iii)] If \,$\mathcal V_0=\overbar{\mathcal V}$, there exist a spherical region
and a cylindrical section which are both solutions.
\end{itemize}
Consequently, if a spherical region $\Omega_H\subset\s^n\times\R$ is a unique solution
to the isoperimetric problem for prescribed volume $\mathcal V_0:={\rm Vol}(\Omega_H),$
then \,${\rm Vol}(\Omega_H)<\overbar{\mathcal V}.$ {Moreover, the constant
$\overbar{\mathcal V}$ is sharp in the following sense:
for any $\delta\in(0,\overbar{\mathcal V})$,
there exists $\mathcal V_0\in(\overbar{\mathcal V}-\delta,\overbar{\mathcal V})$, and a  spherical region
of volume $\mathcal V_0$ which is  the unique solution.}
\end{restatable}

For $n=2,$ the lower bound for the mean curvature of isoperimetric spheres obtained by
Pedrosa gives that the upper bound  $\overbar{\mathcal V}$ of their volumes is approximately
$16.66$. Moreover, any spherical region of volume less than $\overbar{\mathcal V}$ is unique
as solution to the isoperimetric problem. When $n>2$, we cannot ensure this
uniqueness property since it is not clear, although likely,
that the volume of the spherical regions $\Omega_H$ is not constant when $H$
varies in an open interval.

The paper is organized as follows. In Section~\ref{sec-stability},
we briefly discuss stability of CMC hypersurfaces in Riemannian manifolds, and introduce
a result by Koiso which establishes necessary and sufficient
conditions (on the Jacobi operator) for stability of CMC hypersurfaces of complete Riemannian manifolds.
In Section~\ref{sec-lemmas}, we prove three  technical lemmas to be applied in
the proofs of Theorems~\ref{th-nestedspheres}--\ref{thm-uniquenessPI}, which will be presented
in  Section~\ref{sec-proofs}.

\section{Stability of CMC hypersurfaces} \label{sec-stability}

Let $\Sigma$ be a compact oriented hypersurface of a Riemannian manifold $\mathbb M^{n+1}$,  $n\ge 2,$ and recall that
the nonnormalized mean curvature function $H$ of $\Sigma$
is the trace of its shape operator, that is,
the sum of its principal curvatures. Suppose that $\Sigma=\Phi_0(M),$ where
$\Phi_0\colon M\to\mathbb M^{n+1}$ is an immersion of a compact
$n$-dimensional manifold $M$  into $\mathbb M^{n+1}.$ Given a smooth variation $\Phi(t,.)$
of $\Phi_0=\Phi(0,.),$ $t\in (-\delta,\delta),$
its \emph{area} and \emph{balance of volume} functionals are defined as (see~\cite{barbosa-colares,barbosaetal})
\[
\mathcal A(t):=\int_{M}dM_t \quad\text{and}\quad V(t):=\int_{[0,t]\times M}\Phi^*d(\mathbb M^{n+1}),
\]
where $dM_t$  is the area element of $M$ in the metric induced by the immersion $\Phi(t,.)$, and
$\Phi^*d(\mathbb M^{n+1})$ is the pull-back by $\Phi$ of the volume element of $\mathbb M^{n+1}$.
In this setting, one defines the \emph{variational field} $\xi$ of $\Phi(t,.)$ as
\[
\xi:=\frac{\partial\Phi}{\partial t}(t, . )|_{t=0}.
\]
It is well known that (cf. \cite[Lemma 2.1]{barbosaetal})
\begin{equation}  \label{eq-aprimeandvprime}
\mathcal A'(0)=-\int_{\Sigma}Hf \quad\text{and}\quad V'(0)=\int_{\Sigma}f,
\end{equation}
where $f:=\langle\xi, N\rangle$ and  $N$ is the unit normal field of $\Sigma.$
From these identities, one sees that
compact CMC hypersurfaces are critical points of the functional area for
volume-preserving variations (i.e., those satisfying $V'(0)=0$), as we mentioned before.
A compact CMC hypersurface is then called \emph{stable} if $\mathcal A''(0)\ge 0$ for such variations.

The stability of a CMC hypersurface $\Sigma$ is equivalent to the
nonnegativeness of the quadratic form $Q$  defined by
\[
Q(u,u)=-\int_\Sigma uLu
\]
for functions $u\in C^{\infty}(\Sigma)$ satisfying $\int_\Sigma u=0$ (cf. \cite[Proposition 2.7]{barbosaetal}).
Here, $L$ is the (self-adjoint) \emph{Jacobi operator} on $C^{\infty}(\Sigma),$ which is defined as
\[
Lu:=\Delta u+(\|\sigma\|^2+{\rm Ric}(N))u,
\]
being $\Delta u$  the Laplacian of $u$ on $\Sigma,$  $\sigma$  the second fundamental form of $\Sigma,$
and ${\rm Ric}(N)$  the Ricci curvature of $\mathbb M^{n+1}$ evaluated on the unit normal field
$N$ of $\Sigma.$
We add that the Jacobi operator $L$ of the CMC hypersurface $\Sigma$ satisfies the following  identity:
\begin{equation} \label{eq-fundamental}
\frac{\partial H}{\partial t}(t, . )|_{t=0}=Lf.
\end{equation}

In \cite{koiso}, Koiso established a fundamental result
on stability of CMC surfaces with boundary in Euclidean space $\R^3.$ As pointed out by Souam in \cite{souam2},
her result is easily extended to compact oriented CMC hypersurfaces  with empty boundary in any Riemannian manifold,
giving the following result (see also  \cite{damasceno-elbert},
where the authors established a broad extension of Koiso's Theorem).

\begin{kt}
Let $\Sigma$ be a smooth compact
immersed oriented CMC hypersurface in a complete $(n+1)$-dimensional Riemannian manifold \,$\mathbb M^{n+1},$
and let $\lambda_1$ and $\lambda_2$
be the first and second eigenvalues of the Jacobi operator $L$ on
$\Sigma,$ respectively. Under these assumptions, the following hold.
\begin{enumerate}
\item[\rm{\bf A)}] Assume $\lambda_1<0=\lambda_2.$ Then, one has:
\vt
\begin{itemize}[parsep=1ex]
\item[\rm i)]  If there exists an eigenfunction $u$ associated to $\lambda_2=0$ such that $\int_{\Sigma}u\ne 0$, then
$\Sigma$ is unstable.
\item[\rm ii)] If $\int_{\Sigma}u=0$ for any eigenfunction $u$ associated to $\lambda_2=0,$
then  there exists a unique smooth function $v\in(\ker L)^\perp$ (orthogonal with respect to the $L^2$ norm)
which satisfies $Lv=1$ and determines the stability condition of $\Sigma;$ namely,
$\Sigma$ is stable if and only if $\int_{\Sigma}v\ge 0.$
\end{itemize}

\vt

\item[\rm{\bf B)}]  If $\lambda_2<0,$ then $\Sigma$ is unstable.
\end{enumerate}
\end{kt}

\section{Three Technical Lemmas} \label{sec-lemmas}
The proof of Theorem \ref{th-nestedspheres} will be based on the
fundamental properties of the solutions
of the first order linear ODE:
\begin{equation}  \label{eq-odevarphi}
y'=-(n-1)\cot_\epsilon(s) y+1,
\end{equation}
where $n>1$ is an integer,
$\cot_\epsilon:=\cos_\epsilon/\sin_\epsilon,$
and  $\cos_\epsilon, \, \sin_\epsilon$ are as in Table \ref{table-trigfunctions}.
\begin{table}[thb]%
\centering %
\begin{tabular}{ccc}
\toprule %
{{\small\rm Function}}     &      $\epsilon=1$   & $\epsilon=-1$ \\\otoprule %
$\cos_\epsilon $        &      $\cos $       & $\cosh $     \\\midrule
$\sin_\epsilon $        &      $\sin $       & $\sinh $     \\\bottomrule
\end{tabular}
\vtt
\caption{Definition of $\cos_\epsilon$ and $\sin_\epsilon.$}
\label{table-trigfunctions}
\end{table}

Setting
\begin{equation} \label{eq-R}
\mathcal R_\epsilon:=\left\{
\begin{array}{ccl}
+\infty & \text{if} & \epsilon=-1,\\
\pi  & \text{if} & \epsilon=1,
\end{array}
\right.
\end{equation}
it is easily checked that the function $\varphi_\epsilon\colon[0,\mathcal R_\epsilon)\to\R$ defined by
\begin{equation}\label{eq-varphiepsilon}
\varphi_\epsilon(s):=
\left\{
\begin{array}{ll}
\frac{1}{\sin_\epsilon^{n-1}(s)}\int_{0}^{s}{\sin_\epsilon^{n-1}(u)}du & \text{if} \,\, s\ne 0,\\ [1.5ex]
                              0                                        & \text{if} \,\, s= 0
\end{array}
\right.
\end{equation}
is a smooth solution to \eqref{eq-odevarphi} (see Fig.~\ref{fig-varphiplots}).

\begin{lemma} \label{lem-varphi}
The function $\varphi_\epsilon\colon[0,\mathcal R_\epsilon)\rightarrow\R$ has the following
properties:
\begin{itemize}[parsep=1ex]
  \item [\rm i)] $\varphi_\epsilon'(s)>0$ for all $s\in (0,\mathcal R_\epsilon).$
  \item[\rm ii)] $\varphi_\epsilon'(0)=1/n.$
  \item[\rm iii)] $\epsilon\varphi_\epsilon''(s)>0 \,\, \forall s\in(0,\mathcal R_\epsilon).$
\end{itemize}
\end{lemma}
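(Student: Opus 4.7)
The plan is to handle the items in the order (ii), (i), (iii), with (iii) drawing on (i). Everything ultimately rests on the ODE in the form $\varphi_\epsilon'=1-(n-1)\cot_\epsilon\varphi_\epsilon$ and the identity $\cos_\epsilon^2+\epsilon\sin_\epsilon^2\equiv 1$.

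For (ii), I would Taylor-expand at $s=0$. Since $\sin_\epsilon(u)=u+O(u^3)$, one has $\sin_\epsilon^{n-1}(u)=u^{n-1}+O(u^{n+1})$ and hence $F(s):=\int_0^s\sin_\epsilon^{n-1}(u)\,du=s^n/n+O(s^{n+2})$. Dividing by $\sin_\epsilon^{n-1}(s)=s^{n-1}(1+O(s^2))$ gives $\varphi_\epsilon(s)=s/n+O(s^3)$, so $\varphi_\epsilon'(0)=1/n$.

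For (i), I would argue by contradiction. If $\varphi_\epsilon'$ vanished somewhere on $(0,s_0]$, then (ii) would force the existence of a smallest such zero $s_*>0$, with $\varphi_\epsilon'>0$ on $(0,s_*)$ and therefore $\varphi_\epsilon(s_*)>0$. Differentiating the ODE once, using $\cot_\epsilon'=-1/\sin_\epsilon^2$ (immediate from $\cos_\epsilon^2+\epsilon\sin_\epsilon^2\equiv 1$), yields $\varphi_\epsilon''(s_*)=(n-1)\varphi_\epsilon(s_*)/\sin_\epsilon^2(s_*)>0$. But the positivity of $\varphi_\epsilon'$ on $(0,s_*)$ together with $\varphi_\epsilon'(s_*)=0$ forces $\varphi_\epsilon''(s_*)\le 0$ by the mean value theorem and continuity---a contradiction.

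For (iii), the crux is to rewrite $\epsilon\sin_\epsilon^{n+1}\varphi_\epsilon''$ in a form whose sign is controlled by (i). Differentiating the ODE a second time and substituting $\varphi_\epsilon=F/\sin_\epsilon^{n-1}$, after simplification using $\cos_\epsilon^2=1-\epsilon\sin_\epsilon^2$, I would obtain the identity
\[
\epsilon\sin_\epsilon^{n+1}(s)\,\varphi_\epsilon''(s)=(n-1)\,W_\epsilon(s),\qquad W_\epsilon(s):=F(s)\bigl[\epsilon n-(n-1)\sin_\epsilon^2(s)\bigr]-\epsilon\sin_\epsilon^n(s)\cos_\epsilon(s).
\]
Since $W_\epsilon(0)=0$, (iii) reduces to $W_\epsilon'>0$ on $(0,s_0]$. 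A direct differentiation, with the $\sin_\epsilon^{n-1}$ terms collapsing via $1-\cos_\epsilon^2=\epsilon\sin_\epsilon^2$, telescopes into
\[
W_\epsilon'(s)=2\sin_\epsilon^{n+1}(s)\,\varphi_\epsilon'(s),
\]
which is strictly positive by (i). The main obstacle is guessing the auxiliary function $W_\epsilon$ and anticipating the clean factorization of $W_\epsilon'$; without it, the explicit second-derivative formula for $\varphi_\epsilon$ is too tangled to handle both signs of $\epsilon$ in a uniform way.
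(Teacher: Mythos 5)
Your proof is correct, and while the overall architecture matches the paper's (contradiction at a first critical point for (i); an explicit limit at $0$ for (ii); an auxiliary function whose derivative has a clean sign for (iii)), your treatment of (iii) is a genuine streamlining. Your $W_\epsilon$ is exactly $\epsilon$ times the paper's function $\Lambda(s)=\epsilon\sin_\epsilon^2(s)I(s)-\cos_\epsilon(s)\bigl(\sin_\epsilon^n(s)-n\cos_\epsilon(s)I(s)\bigr)$, and both identities you state check out: $\epsilon\sin_\epsilon^{n+1}\varphi_\epsilon''=(n-1)W_\epsilon$ and $W_\epsilon'=2\sin_\epsilon^{n+1}\varphi_\epsilon'$. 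The difference is in how the sign of $W_\epsilon'$ is obtained. The paper writes $\Lambda'=2\epsilon\sin_\epsilon\omega_\epsilon$ with $\omega_\epsilon=\sin_\epsilon^n-(n-1)\cos_\epsilon I$ and then proves $\omega_\epsilon>0$ from scratch via yet another auxiliary function $\mu_\epsilon$ with $\mu_\epsilon'=(n-2)\sin_\epsilon^{n-3}$; this forces a separate explicit computation for $n=2$, where that derivative vanishes. You instead observe (implicitly) that $\omega_\epsilon=\sin_\epsilon^n\varphi_\epsilon'$, so positivity of $W_\epsilon'$ follows at once from part (i), uniformly in $n\ge 2$ and in $\epsilon$, with no case split. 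Your Taylor-expansion proof of (ii) also replaces the paper's L'H\^opital-style limit, to the same effect. The only cosmetic point: in (i), the inequality $\varphi_\epsilon''(s_*)\le 0$ is most cleanly justified by noting that the left difference quotient $\varphi_\epsilon'(s_*+h)/h$ is negative for $h<0$; the appeal to the mean value theorem is fine but not needed.
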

\begin{proof}
To prove (i), we first observe that
$\varphi_\epsilon$ is increasing near $0,$ since it vanishes at $0$
and is  positive in $(0,\mathcal R_\epsilon).$
Assume, by contradiction,  that  $\varphi_\epsilon$ is not
increasing in the whole interval $(0,\mathcal R_\epsilon)$. In this case,
$\varphi_\epsilon$ has a first critical point $s_1$  which is necessarily a local maximum.

Since $\varphi_\epsilon$ is a solution to \eqref{eq-odevarphi}, one has
\begin{equation}  \label{eq-odevarphiproof}
\varphi_\epsilon'(s)=-(n-1)\cot_\epsilon(s)\varphi_\epsilon(s)+1, \,\,\, s\in(0,\mathcal R_\epsilon).
\end{equation}
So,  $\varphi_\epsilon''(s_1)=(n-1)\csc_\epsilon^2(s_1)\varphi_\epsilon(s_1)>0,$
which gives that $s_1$ is  a local minimum --- a contradiction. This proves (i).
\begin{figure}[htbp]
\includegraphics[scale=.25]{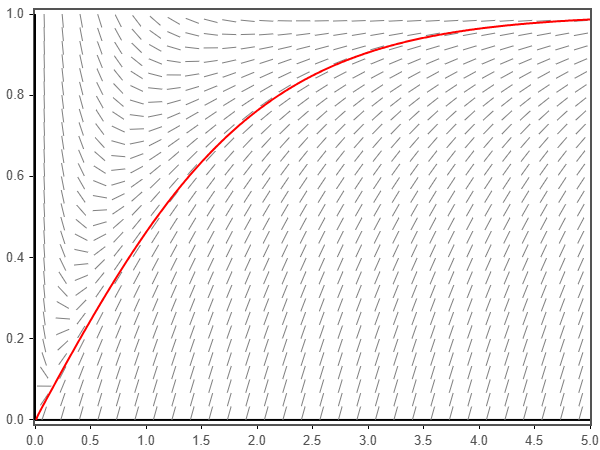}
\hspace{1cm}
\includegraphics[scale=.25]{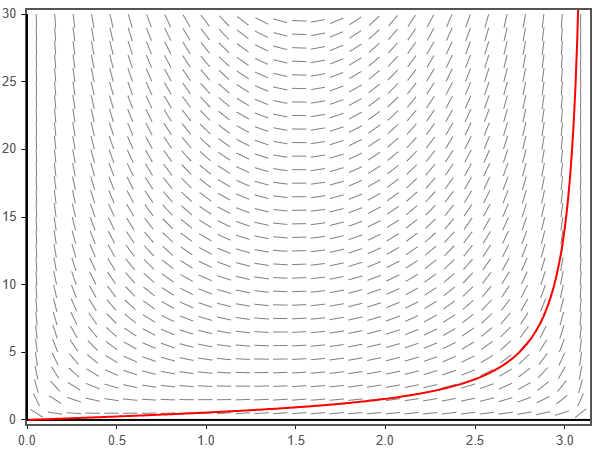}
\caption{Graphs of the solutions $\varphi_\epsilon$ to the ODE \eqref{eq-odevarphi} for $\epsilon=-1$ (left) and $\epsilon=1$ (right).}
\label{fig-varphiplots}
\end{figure}

Regarding (ii), we have from  \eqref{eq-varphiepsilon} and \eqref{eq-odevarphiproof} that
\begin{equation} \label{eq-varphiprime}
\varphi_\epsilon'(s)=-(n-1)\frac{\cos_\epsilon(s)\mathcal I_{\epsilon}(s)}{\sin_\epsilon^{n}(s)}+1,
\end{equation}
where $\mathcal I_{\epsilon}(s):=\int_{0}^{s}{\sin_\epsilon^{n-1}(u)}du.$ By the L'Hôpital rule,
\begin{eqnarray*}
\lim_{s\rightarrow 0}\frac{\cos_\epsilon(s)\mathcal I_{\epsilon}(s)}{\sin_\epsilon^n(s)} &=&
\frac{1}{n}\lim_{s\rightarrow 0}\left(\frac{-\epsilon \mathcal I_{\epsilon}(s)}{\sin_\epsilon^{n-2}(s)}+\cos_\epsilon(s)\right)\\
   &=& \frac{1}{n}\lim_{s\rightarrow 0}\left(-\epsilon\varphi_\epsilon(s)\sin_\epsilon(s)+\cos_\epsilon(s)\right)=1/n,
\end{eqnarray*}
which, together with \eqref{eq-varphiprime}, yields (ii).

To prove  (iii),  we first observe that,
for $n=2,$ one has from equality \eqref{eq-varphiepsilon} that
$\varphi_\epsilon(s)=\epsilon(1-\cos_\epsilon(s))/\sin_\epsilon(s).$
In this case, it can be easily checked that $\epsilon\varphi_\epsilon''>0$ in $(0,\mathcal R_\epsilon).$
So, we can assume $n>2.$ Under this hypothesis, let us show that
\begin{equation} \label{eq-omega}
\Gamma_\epsilon(s):=\sin_\epsilon^n(s)-(n-1)\cos_\epsilon(s)\mathcal I_{\epsilon}(s)>0 \,\,\forall s\in(0,\mathcal R_\epsilon).
\end{equation}

Since $\Gamma_\epsilon(0)=0,$ it suffices to prove that $\Gamma_\epsilon'>0$ in $(0,\mathcal R_\epsilon).$
From a direct computation, we get
\[
\Gamma_\epsilon'(s)=\sin_\epsilon(s)(\epsilon(n-1)\mathcal I_{\epsilon}(s)+\cos_\epsilon(s)\sin_\epsilon^{n-2}(s)).
\]
Setting $\mu_\epsilon(s):=\epsilon(n-1)\mathcal I_{\epsilon}(s)+\cos_\epsilon(s)\sin_\epsilon^{n-2}(s),$
one has $\Gamma_{\epsilon}'(s)=\sin_\epsilon(s)\mu_\epsilon(s)$. Besides,
$\mu_\epsilon(0)=0$ and
\[
\mu_\epsilon'(s)=(n-2)\sin_\epsilon^{n-3}(s)(\cos_\epsilon^2(s)+\epsilon\sin_\epsilon^2(s))=
(n-2)\sin_\epsilon^{n-3}(s)>0 \,\,\, \forall s\in(0,\mathcal R_\epsilon).
\]
Therefore, $\mu_\epsilon,$ and so $\Gamma_\epsilon',$ is positive  in $(0,\mathcal R_\epsilon).$
This finishes the proof of \eqref{eq-omega}.

Now, we have from \eqref{eq-varphiprime} that
\begin{equation} \label{eq-varphitwoprime}
\varphi_\epsilon''(s)=(n-1)\frac{\Lambda_{\epsilon}(s)}{\sin_\epsilon^{n+1}(s)},
\end{equation}
where $\Lambda_{\epsilon}$ is the function
\[
\Lambda_{\epsilon}(s):=\epsilon\sin_\epsilon^2(s)\mathcal I_{\epsilon}(s)-\cos_\epsilon(s)(\sin_\epsilon^n(s)-n\cos_\epsilon(s)\mathcal I_{\epsilon}(s)).
\]
Again, we have $\Lambda_{\epsilon}(0)=0.$ In addition, it can be easily checked that
\[
\Lambda_{\epsilon}'(s)=2\epsilon\sin_\epsilon(s)\Gamma_\epsilon(s),
\]
which yields $\epsilon\Lambda_{\epsilon}'>0$ in $(0,\mathcal R_\epsilon).$
Hence, $\epsilon\Lambda_{\epsilon}(s)$ is positive in $(0,\mathcal R_\epsilon).$
From this and \eqref{eq-varphitwoprime},
we finally have $\epsilon\varphi_\epsilon''>0$ in $(0,\mathcal R_\epsilon),$ as we wished to prove.
\end{proof}


\begin{lemma} \label{lem-psi}
For $\epsilon=1,$ the function $\psi\colon(0,\pi)\to\R$ defined by
\[
\psi(s):=\frac{\varphi_\epsilon'(s)}{\varphi_\epsilon(s)}
\]
has a unique extreme point $s_*\in (0,\pi)$ which is a
global minimum. Furthermore, $\psi$ is {strictly} decreasing in
$(0,s_*)$ and {strictly} increasing in $(s_*,\pi).$
\end{lemma}

\begin{proof}
  Firstly, let us observe that there is no interval $I\subset (0,\pi)$ on which $\psi$
  is constant. Indeed, assuming otherwise, there is a constant $\psi_0>0$ such that, for all $s\in I,$
  $\psi(s)=\psi_0,$  that is, $\varphi_\epsilon'(s)=\psi_0\varphi_\epsilon(s).$
  Hence, there is a constant $C>0$ such that $\varphi_\epsilon(s)=Ce^{\psi_0s}$, $s\in I.$
  However, this  function is clearly distinct from the one defined in~\eqref{eq-varphiepsilon}.
  Thus, $\psi$ is constant in no interval, which implies that $\psi$ is increasing (resp.~decreasing)
  in an interval $I\subset(0,\pi)$ if and only if $\psi$ is strictly increasing (resp.~strictly decreasing) in $I.$

  By items (i) and (ii) of Lemma \ref{lem-varphi}, we have that
  \begin{equation} \label{eq-psi01}
  \lim_{s\to 0}\psi(s)=+\infty.
  \end{equation}
  Also,  from equality~\eqref{eq-odevarphiproof},
  \begin{equation} \label{eq-psi02}
  \lim_{s\to\pi}\psi(s)=\lim_{s\to\pi}\frac{\varphi_\epsilon'(s)}{\varphi_\epsilon(s)}=
  \lim_{s\to\pi}\left[-(n-1)\cot(s)+\frac{1}{\varphi_\epsilon(s)}\right]=+\infty.
  \end{equation}

  It follows from~\eqref{eq-psi01} and~\eqref{eq-psi02} that $\psi$ is bounded from below.
  In particular, $\psi$ has a  global minimum $s_*\in(0,\mathcal R_\epsilon).$

  Now, set $a(s)=-(n-1)\cot s$. Then, $\psi=a+1/\varphi_\epsilon,$ which yields
  \[
  \psi'=a'-\frac{\psi}{\varphi_\epsilon} \quad\text{and}\quad \psi''=a''-\frac{\psi'}{\varphi_\epsilon}+\frac{\psi^2}{\varphi_\epsilon}\cdot
  \]
  Hence, if $s_c\in (0,\pi)$ is a critical point of $\psi,$ we have
  \[
  a'(s_c)=\frac{\psi(s_c)}{\varphi_\epsilon(s_c)} \quad\text{and}\quad \psi''(s_c)=a''(s_c)+\frac{\psi^2(s_c)}{\varphi_\epsilon(s_c)}\cdot
  \]
  Observing that $a''(s)=-2\cot(s)a'(s)$ for all $s\in(0,\pi),$ we can write
  \[
  \psi''(s_c)=-2\cot(s_c)a'(s_c)+\frac{\psi^2(s_c)}{\varphi_\epsilon(s_c)}=
  -2\cot(s_c)\frac{\psi(s_c)}{\varphi_\epsilon(s_c)}+\frac{\psi^2(s_c)}{\varphi_\epsilon(s_c)},
  \]
  which implies that
  \begin{equation} \label{eq-psi''}
  \psi''(s_c)=\frac{\psi(s_c)}{\varphi_\epsilon(s_c)}(\psi(s_c)-2\cot(s_c)).
  \end{equation}

Suppose, by contradiction, that $\psi$ is not {strictly}
increasing in $(s_*,\pi).$ Under this hypothesis,
$\psi$ has a critical point $s_c\in(s_*,\pi),$ which is not a local minimum,
such that $\psi$ is {strictly} increasing in $(s_*,s_c).$ Hence, if we define the function
\[
\tau(s):=\psi(s)-2\cot(s), \,\,\, s\in(0,\pi),
\]
we have  from \eqref{eq-psi''} that $\tau(s_c)\le 0\le\tau(s_*).$ However, $\tau'(s)=\psi'(s)+2\csc^2(s)$
is positive in $(s_*,s_c),$ which is clearly a contradiction.
Therefore, $\psi$ is {strictly} increasing in the interval $(s_*,s_c).$

Assume now that $\psi$ is not {strictly} decreasing in $(0,s_*).$ Then, since $s_*$ is an absolute minimum for
$\psi$, and $\psi(s)\to+\infty$ as $s\to 0,$ there exist $s_1<s_2$ in $(0,s_*)$ such that
$s_1$ and $s_2$ are a minimum and a maximum for $\psi,$ respectively, and $\psi$ is {strictly} increasing
in $(s_1,s_2).$ From this point, we can derive a contradiction by reasoning just as in the preceding paragraph.
Therefore, $\psi$ is {strictly} decreasing in $(0,s_*).$ This concludes the proof.
\end{proof}


\begin{lemma} \label{lem-Fepsilon}
Given $s_0\in (0,\mathcal R_\epsilon),$ let $F_{(\epsilon, s_0)}\colon [0,s_0]\rightarrow\R$ be the function defined by
\begin{equation} \label{eq-Fepsilon}
F_{(\epsilon, s_0)}(s):=\varphi_\epsilon(s_0)\varphi_\epsilon'(s)-\varphi_\epsilon(s)\varphi_\epsilon'(s_0).
\end{equation}
Then,  the following assertions hold (see Fig. \ref{fig-Fgraphs}):
\begin{enumerate}
\item[\rm{\bf A)}] For $\epsilon=-1,$ $F_{(\epsilon,s_0)}$ is positive in $[0,s_0)$ for all $s_0>0.$

\vt

\item[\rm{\bf B)}]  For $\epsilon=1,$ let $s_*$ be as in Lemma {\rm \ref{lem-psi}}.
Then, one has:
\vt
\begin{itemize}[parsep=1ex]
\item[\rm i)]  $F_{(\epsilon,s_0)}$ is positive in $[0,s_0)$ if and only if $s_0\in(0,s_{*}].$
\item[\rm ii)] For any $s_0\in (s_*,\pi),$ there exists $\alpha(s_0)\in(0,s_*)$ such that
$F_{(\epsilon,s_0)}$ is nonnegative in $[0,\alpha(s_0)]$ and negative in $(\alpha(s_0),s_0).$
Moreover, $\alpha(s_0)$ decreases to $0$ as $s_0\to\pi.$
\end{itemize}
\end{enumerate}
\end{lemma}

\begin{proof}
We shall consider first the case $\epsilon=-1.$
From Lemma \ref{lem-varphi}-(ii), we have  that
$F_{(\epsilon,s_0)}(0)=\varphi_\epsilon(s_0)\varphi_\epsilon'(0)=\varphi_\epsilon(s_0)/n>0.$ Since
$F_{(\epsilon,s_0)}(s_0)=0,$  it suffices to prove
that, for any $s_0>0,$  $F_{(\epsilon,s_0)}$ is
strictly decreasing in $[0,s_0).$

For the derivative of $F_{(\epsilon,s_0)},$ we have
\[
F_{(\epsilon,s_0)}'(s)=\varphi_\epsilon(s_0)\varphi_\epsilon''(s)-\varphi_\epsilon'(s)\varphi_\epsilon'(s_0).
\]
Thus,  from Lemma \ref{lem-varphi}-(iii), $F_{(\epsilon,s_0)}'<0$ in $(0,s_0],$
which implies that $F_{(\epsilon,s_0)}$ is strictly decreasing. This proves {\bf A}.

\begin{figure}[htbp]
\includegraphics[scale=.30]{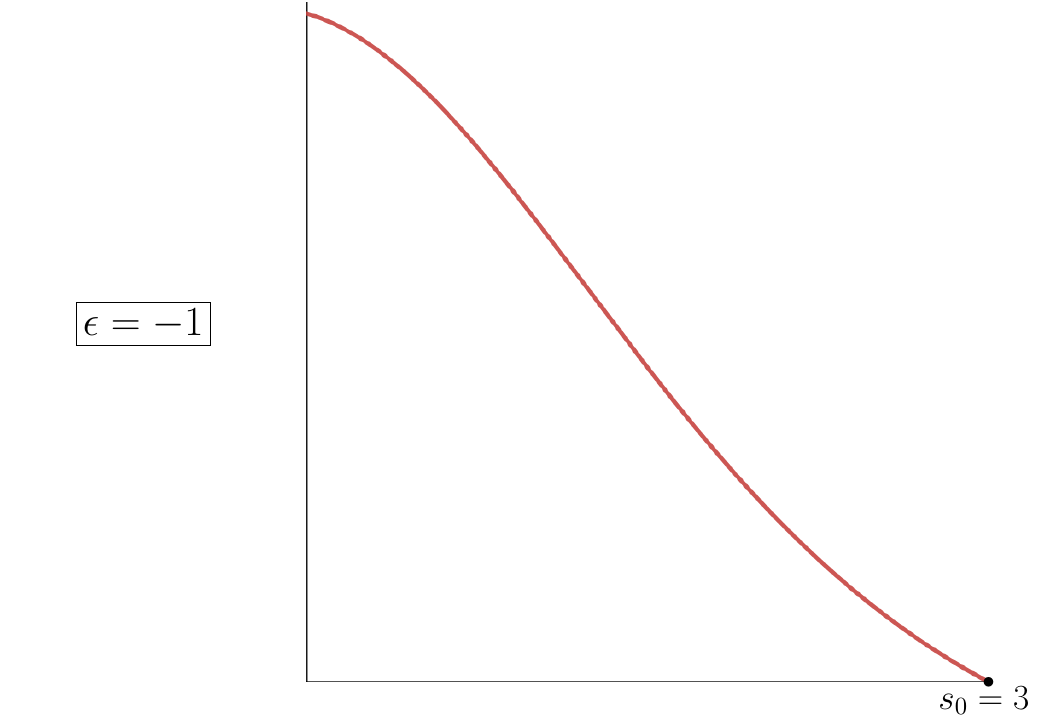}
\hspace{.08cm}
\includegraphics[scale=.30]{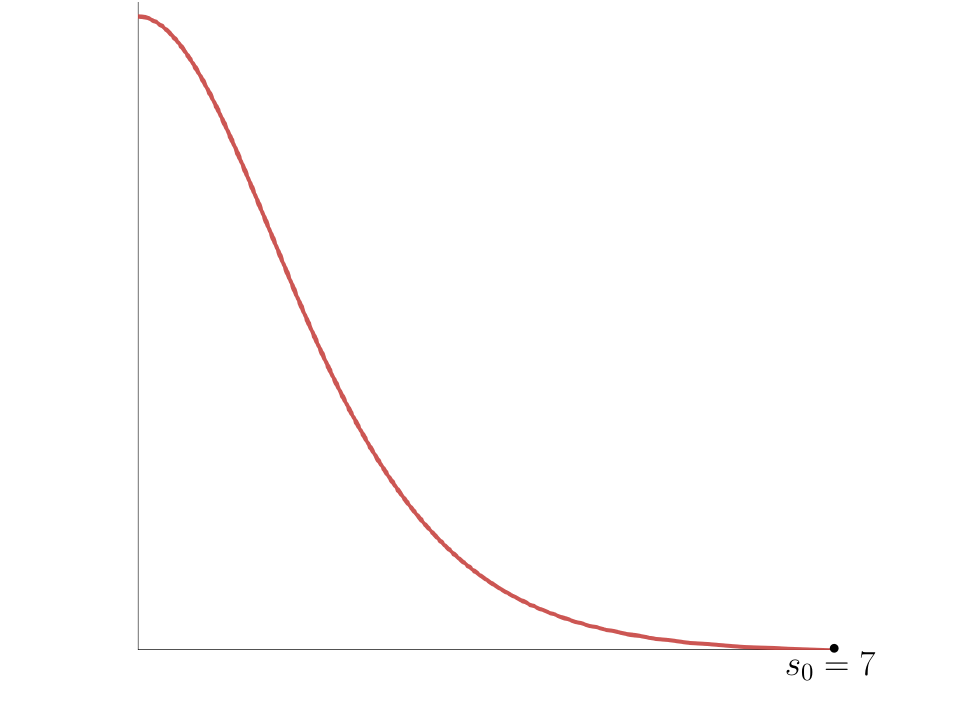}\\
\vspace{.2cm}
\includegraphics[scale=.32]{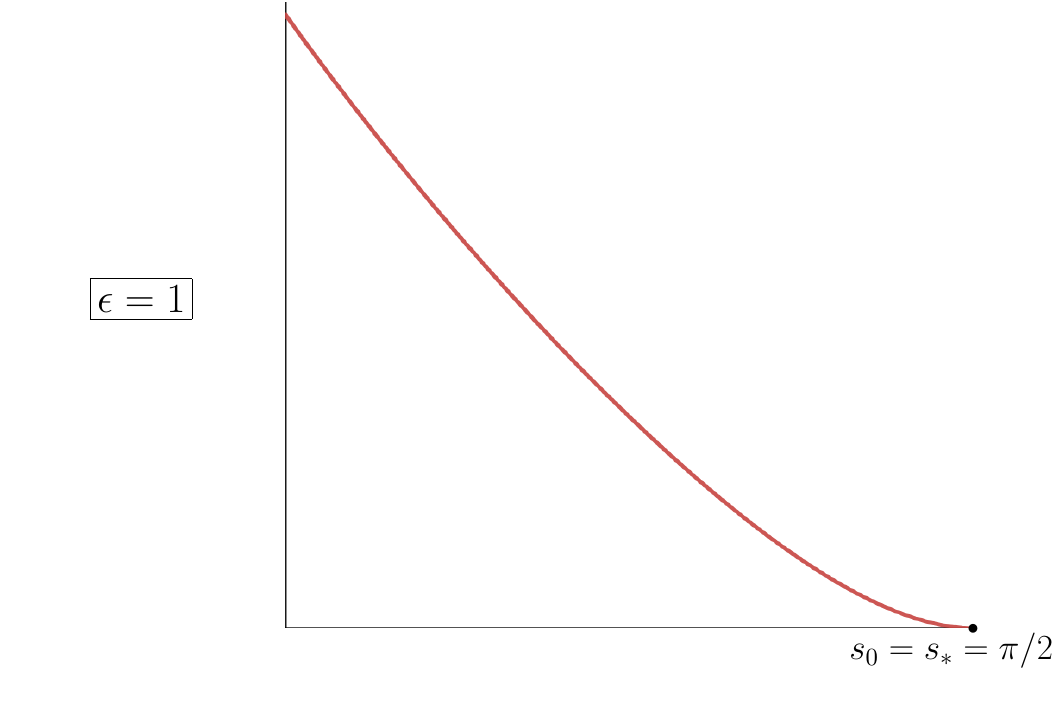}
\hspace{.08cm}
\includegraphics[scale=.27]{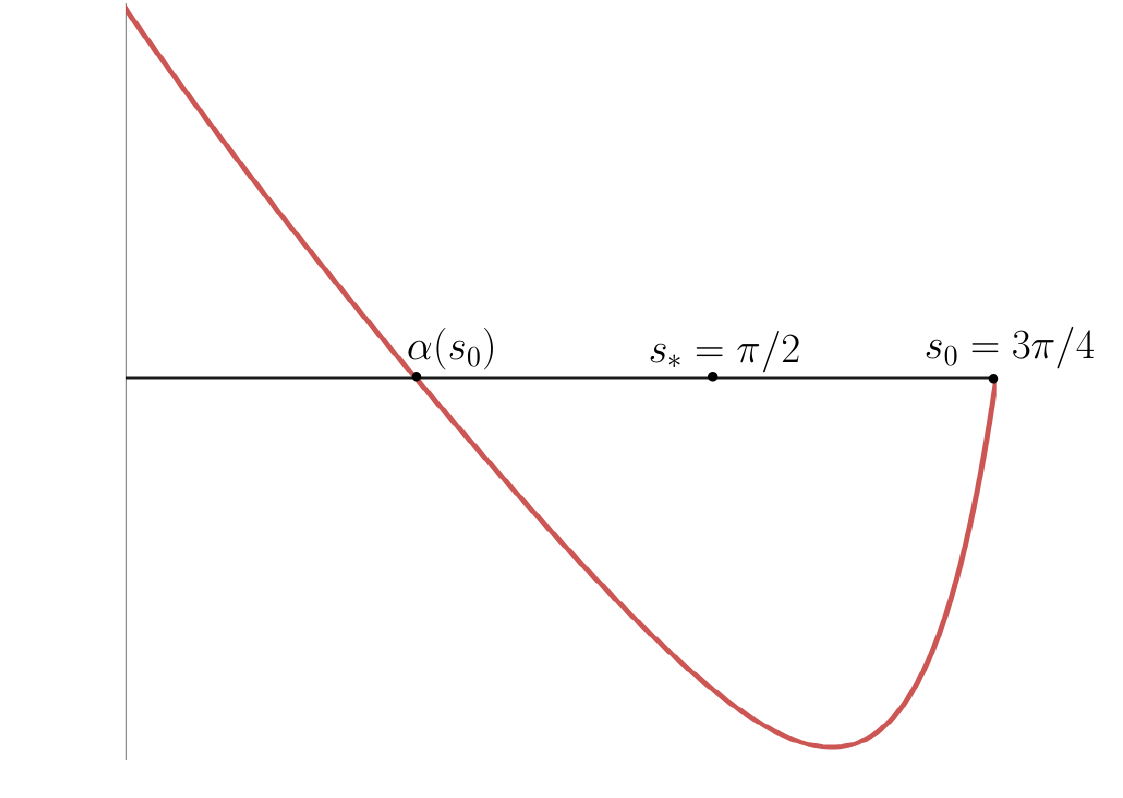}\\
\caption{Graphs of $F_{(\epsilon,s_0)}\colon [0,s_0]\to\R$ for $n=2$ (in which case $s_*=\pi/2$).
As indicated, the ones on the top correspond to the values
$(\epsilon, s_0)=(-1,3)$ (left) and $(\epsilon, s_0)=(-1,7)$ (right), whereas the ones on the bottom correspond to the values
$(\epsilon, s_0)=(1,\pi/2)$ (left) and $(\epsilon, s_0)=(1,3\pi/4)$ (right).}
\label{fig-Fgraphs}
\end{figure}

Assume now that $\epsilon=1.$  Consider the function $\psi$
introduced in Lemma \ref{lem-psi} and choose $s_0\in(0,\pi).$ It is easily seen that
\begin{equation} \label{eq-iff}
{F_{(\epsilon,s_0)}(s)> 0 \,\,\Leftrightarrow\,\, \psi(s)> \psi(s_0) \quad\text{and}\quad F_{(\epsilon,s_0)}(s)=0 \,\,\Leftrightarrow\,\, \psi(s)=\psi(s_0).}
\end{equation}

Assertion  {\bf B}-(i) follows
directly from \eqref{eq-iff} and Lemma~\ref{lem-psi}.
Regarding {\bf B}-(ii), notice  that the equivalences \eqref{eq-iff} and Lemma~\ref{lem-psi}
also imply that $F_{(\epsilon,s_0)}$ is negative in $[s_*,s_0)$ for all $s_0\in(s_*,\pi).$
Thus, since $F_{(\epsilon,s_0)}(0)>0,$ there exists a unique  $\alpha=\alpha(s_0)>0$ with the following properties:
\begin{equation} \label{eq-alpha0}
F_{(\epsilon,s_0)}(\alpha(s_0))=0, \,\,\,\, F_{(\epsilon,s_0)}'(\alpha(s_0))<0, \,\,\,\, \text{and} \,\,\,\,\,
F_{(\epsilon,s_0)}|_{[0,\alpha(s_0)]}\ge 0.
\end{equation}
Under these conditions, $F_{(\epsilon,s_0)}$ is necessarily negative in $(\alpha(s_0),s_*].$ Otherwise, there
would exist $s_1<s_2\in(\alpha(s_0),s_*)$ such that $F_{(\epsilon,s_0)}(s_1)<0\le F_{(\epsilon,s_0)}(s_2).$ But,
by Lemma \ref{lem-psi}, $\psi(s_1)>\psi(s_2).$ This, together with \eqref{eq-iff}, would then give
\[
\psi(s_0)>\psi(s_1)>\psi(s_2)\ge\psi(s_0),
\]
which is  a contradiction. Hence, $F_{(\epsilon,s_0)}$ is negative in
$(\alpha(s_0),s_0)$ for all $s_0\in(s_*,\pi).$

To prove the final statement in {\bf B}-(ii), let us first observe that, by Lemma \ref{lem-psi},
$\psi'(s_0) {\ge} 0$ whenever $s_0\in(s_*,\pi).$ In such case, for $s\in (0,s_0),$ we have
\begin{equation} \label{eq-partialFs0}
\frac{\partial}{\partial s_0}\left(\frac{F_{(\epsilon,s_0)}(s)}{\varphi_\epsilon(s_0)}\right)=
\frac{\partial}{\partial s_0}(\varphi'(s)-\psi(s_0)\varphi(s))=-\psi'(s_0)\varphi(s){\le} 0.
\end{equation}
This means that, for any $\bar s_0>s_0$ and $s\in (0,s_0)\subset (0,\bar s_0),$ the inequality
\[
\frac{F_{(\epsilon,\bar s_0)}(s)}{\varphi_\epsilon(\bar s_0)}{\le}\frac{F_{(\epsilon,s_0)}(s)}{\varphi_\epsilon(s_0)}
\]
holds. Applying it to $s=\alpha(s_0)$ yields $F_{(\epsilon,\bar s_0)}(\alpha(s_0))\le 0,$ so that
$\alpha(\bar s_0)\le \alpha(s_0).$ Therefore, the function $s_0\in (s_*,\pi)\mapsto\alpha(s_0)$ is
positive, continuous, and  {decreasing}, which implies that
\[
\lim_{s_0\to\pi}\alpha(s_0)=\mathcal L:=\inf\{\alpha(s_0)\,;\, s_0\in (s_*,\pi)\}.
\]

To finish the proof, assume by contradiction that $\mathcal L>0.$ Under this assumption, we have
from \eqref{eq-alpha0} and the definition of infimum that
$F_{(\epsilon,s_0)}(\mathcal L)\ge 0$ for all $s_0\in(s_*,\pi).$ However, considering \eqref{eq-psi02}, we also have
\[
\lim_{s_0\to\pi}\frac{F_{(\epsilon,s_0)}(\mathcal L)}{\varphi_{\epsilon}(s_0)}=
\lim_{s_0\to\pi}(\varphi_{\epsilon}'(\mathcal L)-\psi(s_0)\varphi_{\epsilon}(\mathcal L))=-\infty,
\]
so that $F_{(\epsilon,s_0)}(\mathcal L)<0$  for any  sufficiently large $s_0\in(s_*,\pi)$, which is
a contradiction. Therefore, $\mathcal L=0,$ as we wished to prove.
\end{proof}


\section{Proofs of Theorems \ref{th-nestedspheres}--\ref{thm-uniquenessPI} } \label{sec-proofs}

In this section, we shall approach the rotational CMC spheres
$\Sigma_H\in\mathscr F_\epsilon$ of $\qr$ as in \cite{delima-manfio-santos}, where the more general
case of hypersurfaces of constant higher order mean curvature was considered
(see also \cite{elbert-earp}).

Recall from the introduction that, for each $H>C_\epsilon$, and for a fixed point
$o\in\mathbb Q_\epsilon^n\times\{0\}$, there exists a unique
rotational sphere $\Sigma_H\in\mathscr F_\epsilon$ of constant mean curvature $H$ and axis
$\{o\}\times\R,$
where $C_\epsilon=0$ for $\epsilon=1$ and $C_\epsilon=n-1$ for $\epsilon=-1.$
In~\cite{delima-manfio-santos}, it is proved that, for each $H>C_\epsilon,$
there exists $s_0=s_0(H)\in(0,\mathcal R_\epsilon)$ such that
$\Sigma_H$ is given by the union of the closure of
a radial graph $\Sigma_H^-$ over
the open geodesic ball $B(o,s_0)\subset\q_\epsilon^n\times\{0\}$
with its reflection about a totally geodesic horizontal hyperplane
$P_H\subset\qr,$ so that $P_H$ is a hyperplane
of symmetry for $\Sigma_H$ (Fig.~\ref{fig-sphere}).

\begin{definition}
We shall call $s_0=s_0(H)\in(0,\mathcal R_\epsilon)$ the \emph{radius} of the
sphere $\Sigma_H\in\mathscr F_\epsilon$.
\end{definition}

\begin{figure}[htbp]
\includegraphics{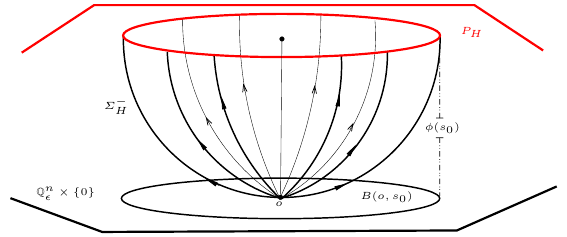}
\caption{A rotational CMC hemisphere in $\qr.$}
\label{fig-sphere}
\end{figure}

As discussed in~\cite[Section 4]{delima-manfio-santos}, the  height  function of  $\Sigma_{H}^-$ is
\begin{equation}  \label{eq-phi}
\phi(s)=\int_{0}^{s}\frac{\rho(u)}{\sqrt{1-\rho^2(u)}}du, \,\,\,\, s\in [0,s_0),
\end{equation}
where $\rho\colon[0,s_0)\to[0,1)$ is defined as
\begin{equation}\label{eq-tausn}
\rho(s):=H\varphi_\epsilon(s),  \,\,\, s\in[0,s_0),
\end{equation}
being $\varphi_\epsilon$ the function introduced
in~\eqref{eq-varphiepsilon}. It is then established that $\rho$ extends
smoothly to $s_0$ and satisfies $\rho(s_0)=1.$ Thus,
since $\varphi_\epsilon$ is clearly injective on $(0,\mathcal R_\epsilon)$,
we have from \eqref{eq-tausn} that the mean curvature $H$ of the sphere
$\Sigma_H\in\mathscr F_\epsilon$ is an injective function of its radius $s_0$ and
satisfies
\begin{equation} \label{eq-H(s0)}
H(s_0)=\frac{1}{\varphi_\epsilon(s_0)}, \,\,\,  s_0\in(0,\mathcal R_\epsilon).
\end{equation}

Summarizing, we have that the correspondence
\begin{equation}  \label{eq-correspondence}
H=H(s_0)\in(C_\epsilon,+\infty)\,\leftrightarrow\, s_0=s_0(H)\in(0,\mathcal R_\epsilon)
\end{equation}
is bijective. Moreover, from \eqref{eq-H(s0)} and Lemma \ref{lem-varphi}-(i), $H=H(s_0)$ is
strictly decreasing, which implies that $s_0=s_0(H)$ is strictly decreasing as well.
It is also clear that, in this setting,  the horizontal plane of symmetry
of $\Sigma_H$ is $P_H:=\q_\epsilon^n\times\{\phi(s_0)\}.$

For $\epsilon=1,$ it is easily checked from the
definition of $\varphi_\epsilon$ in \eqref{eq-varphiepsilon} that
\begin{equation} \label{eq-limitvarphi}
\lim_{s_0\to\pi}\varphi_\epsilon(s_0)=+\infty.
\end{equation}
Besides, $\varphi_\epsilon(0)=0.$ These facts and \eqref{eq-H(s0)} then yield
\begin{equation} \label{eq-limitHzeroandpi}
\lim_{s_0\to 0}H(s_0)=+\infty \quad\text{and}\quad \lim_{s_0\to\pi}H(s_0)=0.
\end{equation}

Finally, we recall that
a subfamily
$\mathscr F_C:=\{\Sigma_H\,;\, H>C\}\subset\mathscr F_\epsilon$
is  said to be \emph{nested} if
$\Omega_H\subset\Omega_{H^*}$ whenever $H>H^*>C,$ where
$\Omega_H$ is the region of $\qr$ bounded by the sphere $\Sigma_H\in\mathscr F_\epsilon.$

Now, we are in position to prove Theorem~\ref{th-nestedspheres},
which we restate for the reader's convenience.

\nested*

\begin{proof}
Keeping the above notation, assume $H>H^*$
and denote the height and $\rho$ functions of
$\Sigma_{H^*}^-$  by $\phi^*$ and $\rho^*,$ respectively.
Writing $s_0^*:=s_0(H^*),$ we have $s_0<s_0^*,$
since $s_0$ is a decreasing function of $H$.
Then, considering \eqref{eq-tausn}, we get
\[
\rho(s)>\rho^*(s) \,\forall s\in (0, s_0),
\]
which, together with \eqref{eq-phi}, implies that
\begin{equation} \label{eq-inequalityphi}
\phi(s)>\phi^*(s) \,\,\,\forall s\in(0, s_0).
\end{equation}

It is clear  from \eqref{eq-inequalityphi} that  $\Omega_H\subset\Omega_{H^*}$ if and only if
$\phi(s_0^*)>\phi(s_0)$ (see Fig.~\ref{fig-nestedhemispheres}).
In other words, a subfamily $\mathscr F_C:=\{\Sigma_H\,;\, H>C\}$
of $\mathscr F_\epsilon$ is nested if and only if
\[
s_0=s_0(H)\mapsto \phi(s_0), \,\,\, H>C,
\]
is an increasing function of $s_0$. This suggests us
to introduce the radius $s_0$ as a variable for $\rho$. More precisely, we consider
the correspondence \eqref{eq-correspondence}, as well as equality \eqref{eq-H(s0)},
and  rewrite~\eqref{eq-tausn} as
\begin{equation}  \label{eq-rho&H}
\rho(s,s_0):=H(s_0)\varphi_\epsilon(s)=\frac{\varphi_\epsilon(s)}{\varphi_\epsilon(s_0)}\cdot
\end{equation}
In this setting, we have from~\eqref{eq-phi} that
\begin{equation} \label{eq-phi(s0)}
\phi(s_0)=\int_{0}^{s_0}\frac{\rho(s,s_0)}{\sqrt{1-\rho^2(s,s_0)}}ds.
\end{equation}

\begin{figure}[htbp]
\includegraphics{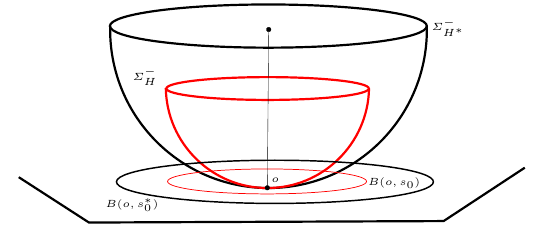}
\caption{Nested CMC hemispheres in $\qr.$}
\label{fig-nestedhemispheres}
\end{figure}

Calling $J(s,s_0)$ the integrand of the integral in~\eqref{eq-phi(s0)},
an application of the Leibniz rule (see~\cite[Lemma 5.1]{pedrosa}) gives
\[
\frac{\partial\phi}{\partial s_0}(s_0)=\int_{0}^{s_0}\left(\frac{\partial J}{\partial s_0}(s,s_0)
+\frac{\partial J}{\partial s}(s,s_0)\right)ds.
\]
Then, from a straightforward calculation, we get
\begin{equation} \label{eq-partialphi}
\frac{\partial\phi}{\partial s_0}(s_0) =
\int_{0}^{s_0}{\frac{1}{\sqrt{(1-\rho^2(s,s_0))^3}}}\left(\frac{\partial\rho}{\partial s}(s,s_0)+
\frac{\partial\rho}{\partial s_0}(s,s_0)\right)ds.
\end{equation}

Considering~\eqref{eq-rho&H}, we have
\begin{eqnarray}
\frac{\partial\rho}{\partial s}(s,s_0)+\frac{\partial\rho}{\partial s_0}(s,s_0)
&=&  \frac{\varphi_\epsilon'(s)}{\varphi_\epsilon(s_0)}-\frac{\varphi_\epsilon'(s_0)}{\varphi_\epsilon^2(s_0)}\varphi_\epsilon(s)\nonumber\\
&=&  \frac{1}{\varphi_\epsilon^2(s_0)}\left(\varphi_\epsilon'(s)\varphi_\epsilon(s_0)-\varphi_\epsilon'(s_0)\varphi_\epsilon(s)\right)\nonumber\\
&=&  \frac{1}{\varphi_\epsilon^2(s_0)}F_{(\epsilon,s_0)}(s), \nonumber
\end{eqnarray}
where $F_{(\epsilon,s_0)}$ is the function defined in \eqref{eq-Fepsilon}. This last equality and
\eqref{eq-partialphi} then yield
\begin{equation} \label{eq-partialphi007}
\frac{\partial\phi}{\partial s_0}(s_0) =
\frac{1}{\varphi_\epsilon^2(s_0)}\int_{0}^{s_0}{\frac{F_{(\epsilon,s_0)}(s)}{\sqrt{(1-\rho^2(s,s_0))^3}}}ds.
\end{equation}

It follows from \eqref{eq-partialphi007} and Lemma \ref{lem-Fepsilon}-{\bf A}
that, for $\epsilon=-1,$  $\partial\phi/\partial s_0>0$ at any $s_0>0.$ So, in this case,
the whole family $\mathscr F_\epsilon$ is nested. This proves statement {\bf A} of the theorem.

For $\epsilon=1,$ Lemma \ref{lem-Fepsilon}-{\bf B} gives that $F_{(\epsilon,s_0)}$ is positive
in $[0,s_0)$ whenever $s_0\in (0,s_*],$ where $s_*$ is as in Lemma \ref{lem-psi}. Thus,
the subfamily $\mathscr F_{H(s_*)}$ is nested, which implies that
$C_{\min}:=\inf\{C\ge 0\,;\, \mathscr F_C \,\, \text{is nested}\}$
is well defined and satisfies $C_{\min}\le H(s_*).$

Let us show now that $C_{\min}>0.$ With this purpose, we set
\[
\Lambda(s,s_0):={\frac{1}{\sqrt{(1-\rho^2(s,s_0))^3}}}\ge 1.
\]
Since $\rho(s,s_0)=\varphi_\epsilon(s)/\varphi_\epsilon(s_0),$  we conclude from \eqref{eq-limitvarphi} that,
for any fixed $s\in (0,s_0),$  $\Lambda(s,s_0)$ decreases to $1$ as $s_0\to\pi.$

Now, for a given $s_0\in (s_*,\pi),$ we
have from \eqref{eq-partialphi007} that (notation as in Lemma \ref{lem-Fepsilon})
\begin{eqnarray}
\frac{\partial\phi}{\partial s_0}(s_0) & = & \frac{1}{\varphi_\epsilon(s_0)}\left[
\int_{0}^{\alpha(s_0)}\Lambda(s,s_0)\frac{F_{(\epsilon,s_0)}(s)}{\varphi_\epsilon(s_0)}ds+
\int_{\alpha(s_0)}^{s_0}\Lambda(s,s_0)\frac{F_{(\epsilon,s_0)}(s)}{\varphi_\epsilon(s_0)}ds\right]\nonumber\\
                                       & \le & \frac{1}{\varphi_\epsilon(s_0)}\left[
\int_{0}^{\alpha(s_0)}\Lambda(s,s_0)\frac{F_{(\epsilon,s_0)}(s)}{\varphi_\epsilon(s_0)}ds+
\int_{\alpha(s_0)}^{s_0}\frac{F_{(\epsilon,s_0)}(s)}{\varphi_\epsilon(s_0)}ds\right],\label{eq-part}
\end{eqnarray}
where, in \eqref{eq-part}, we used  that $F_{(\epsilon,s_0)}$ is negative in
$(\alpha(s_0),s_0)$ and that $\Lambda\ge 1.$

Recall that, for any fixed $s\in (0,s_0),$   the function
$s_0\in(s_*,\pi)\mapsto F_{(\epsilon,s_0)}(s)/\varphi_\epsilon(s_0)$ is
decreasing (cf.~\eqref{eq-partialFs0}). Besides, Lemma~\ref{lem-Fepsilon} gives that
$\alpha(s_0)$ decreases to $0$ as $s_0$ increases to $\pi.$ These facts clearly imply that
the first integral in~\eqref{eq-part} decreases to $0$ as $s_0$ increases to $\pi.$

On the other hand, from \eqref{eq-psi02}, and again from
the fact that $F_{(\epsilon,s_0)}$ is negative in
$(\alpha(s_0),s_0),$ one has, for each fixed $s\in(0,s_0),$
\[
\lim_{s_0\to\pi}\frac{F_{(\epsilon,s_0)}(s)}{\varphi_\epsilon(s_0)}=
\lim_{s_0\to\pi}(\varphi'(s)-\psi(s_0)\varphi(s))=-\infty,
\]
so that the second integral in~\eqref{eq-part} decreases
to $-\infty$ as $s_0$ increases to $\pi.$ Therefore, at any $s_0>s_*$  sufficiently close
to $\pi,$ $\partial\phi/\partial s_0$ is negative at $s_0,$ which implies that the family
$\mathscr F_\epsilon$ is not entirely nested, that is,  $C_{\min}>0,$ as we wished to prove.

It follows from the discussion in the preceding paragraphs that,
in an open interval $(\pi-\delta,\pi)\subset (s_*,\pi),$
the function $\phi=\phi(s_0)$
decreases to its infimum as $s_0$ increases to $\pi$.
To see that $\inf\phi=0,$ we first observe that, for $\epsilon=1,$  the principal curvatures of
a sphere $\Sigma_{H(s_0)}\in\mathscr F_\epsilon$ are the functions
(cf. \cite[Section 4]{delima-manfio-santos}):
\begin{itemize}[parsep=1ex]
\item $k_i(s,s_0)=\cot(s)\rho(s,s_0), \,\,\, i=1,\dots, n-1$,
\item $k_n(s,s_0)=\rho'(s,s_0).$
\end{itemize}
But,  for all fixed $s\in (0,s_0)$, we have from
\eqref{eq-limitvarphi} and \eqref{eq-rho&H} that
\[
\lim_{s_0\to\pi}\rho(s,s_0)=\lim_{s_0\to\pi}\rho'(s,s_0)=0.
\]
Therefore,
\begin{equation} \label{eq-limitkis}
\lim_{s_0\to\pi}k_i(s,s_0)=0, \,\,\, i=1,\dots, n.
\end{equation}

Since, for $s_0$ near $\pi,$ the height function of $\Sigma_{H(s_0)}$ is a strictly decreasing function of
$s_0$,
the equalities \eqref{eq-limitkis} imply that, as $s_0\to\pi,$  the lower
hemispheres $\Sigma_{H(s_0)}^-$ converge uniformly to
the totally geodesic hyperplane $\s^n\times\{0\}$ (Fig.~\ref{fig-tapioca}).
In particular, $\inf\phi=0.$ This proves statement  {\bf B} and concludes the proof of the theorem.
\end{proof}

\begin{figure}[hb]
\includegraphics[scale=1]{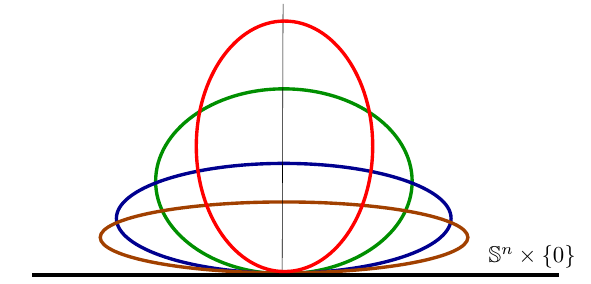}
\caption{\small The rotational CMC spheres $\Sigma_H$ of $\s^n\times\R$ converge to a double copy of $\s^n\times\{0\}$
as their mean curvatures decrease to $0.$}
\label{fig-tapioca}
\end{figure}

\begin{remark} \label{rem-flawed}
The case $\epsilon=-1$ of Theorem \ref{th-nestedspheres} was considered in \cite{hsiang} as
part of the proof of the uniqueness of the rotational CMC spheres
as isoperimetric hypersurfaces of  $\h^n\times\R.$ However, the proof
the authors provided is incorrect. More precisely,
on page 48 of \cite{hsiang}, the  equality of the first line
of equations in (11) should be
\[
\frac{\partial^2y}{\partial h\partial u}=
-\left(\frac{\partial u}{\partial y}\right)^{-2}\left(\frac{\partial^2u}{\partial h\partial y}+
\frac{\partial^2u}{\partial y^2}\frac{\partial y}{\partial h}\right),
\]
but the second summand of the second factor
is missing there.
Therefore, the reasoning that follows is flawed. Furthermore, if that proof in \cite{hsiang} were correct, one
could easily adapt it to show that the entire family $\mathscr F_\epsilon$
of rotational CMC spheres of $\s^n\times\R$ is nested, which is
not true, by Theorem \ref{th-nestedspheres}. As a matter of fact, this inconsistency suggested us
the incorrectness  of the proof in \cite{hsiang}.
\end{remark}

Next, we restate and prove
Theorems~\ref{th-uniquenessPI}--\ref{thm-uniquenessPI}.

\uniquenessone*
\begin{proof}
Since $\qr$ is homogeneous, there exists a solution to the isoperimetric problem for
any preassigned volume $\mathcal V_0>0$ (cf.~\cite{almgren}).
For $\epsilon=-1,$~\cite[Theorems 1 and 2]{hsiang}
ensure that any such solution is necessarily congruent to a spherical region $\Omega_{H_0}$.
However, by Theorem~\ref{th-nestedspheres},  $\mathscr F_\epsilon$ is nested for $\epsilon=-1.$
Therefore, there exists only one $H_0>n-1$ such that $\Omega_{H_0}$ has volume $\mathcal V_0.$

Assume now that $\epsilon=1.$ By \cite[Theorem 2]{pedrosa}, an isoperimetric region of $\s^n\times\R$ is either
a cylindrical section $\Omega_{[a,b]}:=\s^n\times [a,b]$ or a spherical region $\Omega_H$.
It is easily checked that
\begin{equation} \label{eq-areavolume}
{\rm Area}\,(\partial\Omega_{[a,b]})=2\omega_n \quad\text{and}\quad  {\rm Vol}\,(\Omega_{[a,b]})=(b-a)\omega_n,
\end{equation}
where $\omega_n$ is the area of \,$\s^n.$ In particular, given a spherical region $\Omega_H$,
any cylindrical section $\Omega_{[a,b]}$ such that
$b-a={\rm Vol}\,(\Omega_H)/\omega_n$ has the same volume as $\Omega_H$. This, together with the first
equality in \eqref{eq-areavolume}, proves the following

\begin{claim} \label{claim-isoperimetric}
Given $H>0$, consider the spherical region $\Omega_H\subset\s^n\times\R$, the cylindrical section $\Omega_{[a,b]}$ of
volume $\mathcal V:={\rm Vol}(\Omega_H)$,
and the isoperimetric problem for prescribed volume $\mathcal V$.
Then, the following assertions hold:
\begin{itemize}[parsep=1ex]
\item $\Omega_H$ is the only solution  if and only if ${\rm Area}(\Sigma_H)<2\omega_n$.
\item $\Omega_{[a,b]}$ is the only solution if and only if ${\rm Area}\,(\Sigma_H)>2\omega_n$.
\item $\Omega_H$ and $\Omega_{[a,b]}$ are  both solutions if and only if ${\rm Area}\,(\Sigma_H)=2\omega_n$.
\end{itemize}
\end{claim}

Considering the area of $\Sigma_H$ and the volume of $\Omega_H$ as functions of the radius $s_0=s_0(H)$
of $\Sigma_H,$ we have from~\cite[Proposition 4.1]{pedrosa} that
\begin{equation}\label{eq-areaandvolume}
  \begin{aligned}
    {\rm Area}\,(\Sigma_H) &= 2\omega_{n-1}\int_{0}^{s_0}\frac{\sin^{n-1}(s)}{\sqrt{1-\rho^2(s,s_0)}}ds,\\[1ex]
    {\rm Vol}\,(\Omega_H) &=2\omega_{n-1}\int_{0}^{s_0}\frac{\mathcal I(s)\rho(s,s_0)}{\sqrt{1-\rho^2(s,s_0)}}ds,
  \end{aligned}
\end{equation}
where $\rho$ is as in~\eqref{eq-rho&H} and $\mathcal I(s):=\int_{0}^{s}\sin^{n-1}(s).$
Clearly, both integrals in these equalities converge to $0$ as $s_0\to 0.$ From this and
\eqref{eq-limitHzeroandpi}, we get
\begin{equation} \label{eq-limitareavolume}
\lim_{H\to+\infty}{\rm Area}\,(\Sigma_H)=\lim_{H\to+\infty}{\rm Vol}\,(\Omega_H)=0.
\end{equation}

It follows from Claim~\ref{claim-isoperimetric} and equalities~\eqref{eq-limitareavolume} that there exist $\delta, C_\delta>0$
having the following property: for all $\mathcal V_0\in (0,\delta),$
there exists $H_0>C_\delta$ such that the spherical region
$\Omega_{H_0}$ is a solution to the isoperimetric problem
for prescribed volume $\mathcal V_0.$
We can assume $\delta$ sufficiently small, so that
$C_\delta\ge C_{\min},$ where $C_{\min}$ is as in Theorem~\ref{th-nestedspheres}. In this way,
the solution $\Omega_{H_0}$ is unique, for the subfamily
$\mathscr F_{C_{\min}}\subset\mathscr F_\epsilon$ is nested, by
Theorem~\ref{th-nestedspheres}. This concludes the proof.
\end{proof}

\stability*
\begin{proof}
With the purpose of applying Koiso's Theorem  (see Section~\ref{sec-stability}),
we first prove that, given $\Sigma_H\in\mathscr F_\epsilon,$
the first and second eigenvalues $\lambda_1<\lambda_2$
of the Jacobi operator $L$ on $C^{\infty}(\Sigma_H)$
satisfy $\lambda_1<0=\lambda_2.$

Denote by $N$ the inward unit normal to $\Sigma_H.$
Since $\partial_t$ is a Killing field on $\qr,$ {it follows from
\eqref{eq-fundamental} that} the \emph{angle
function} $\theta:=\langle N,\partial_t\rangle$
belongs to the kernel of  $L.$ In addition, $\theta$ changes sign on $\Sigma_H,$ for it
takes both  values $-1$ and $1$ on the critical points of the height function
of $\Sigma_H.$ On the other hand,
it is a well known fact that any eigenfunction to the first eigenvalue
$\lambda_1$ of $L$ is either positive or negative. Hence, we must have  $\lambda_1<0$
(notice that this argument applies to any compact CMC hypersurface of $\qr$).

The proof that $\lambda_2=0$ is totally analogous to the one given
by Souam in~\cite[Theorem 2.2]{souam} in the case $n=2$, so we shall just sketch it here.
Let $\Sigma_H^-$ and $\Sigma_H^+$ be the two hemispheres of $\Sigma_H$ separated by $P_H$,
its horizontal hyperplane of symmetry. By using the angle function as in the preceding paragraph,
one concludes that $0$ is the first eigenvalue of $L$  for the Dirichlet problem
on each of these hemispheres.

Now, let $P\subset\mathbb Q_\epsilon^n$ be a totally geodesic hypersurface containing $o,$
and consider the vertical hyperplane $\Pi:=P\times\R$ of $\qr$, which is clearly a hyperplane
of symmetry of $\Sigma_{H}.$
Choose $q\in P$ such that $q$ is different from $o$ and, in the case $\epsilon=1,$ from its antipodal,
and take the Killing field $X_q$ associated to a one parameter family
of rotations around the axis $\{q\}\times\R.$ Then,
the function $\langle X_q,N\rangle$ belongs to $\ker L$ and vanishes precisely on $\Pi\cap\Sigma_{H}$,
which implies that $0$ is the first eigenvalue of $L$  for the Dirichlet problem on each
of the two connected components of $\Sigma_H-(\Sigma_H\cap\Pi)$.

Assume, by contradiction, that $\lambda_2<0$, and denote by
$E_{\lambda_2}$ the eigenspace associated to $\lambda_2.$ By means of
Courant's nodal theorem, and using the fact that $0$ is the first eigenvalue
of $L$  for the Dirichlet problems mentioned above,
one shows that the eigenfunctions
of $E_{\lambda_2}$ are invariant by reflection about $P_H$ and by rotations around the
axis $\{o\}\times\R$ of $\Sigma_H.$ From the latter property, the (compact) zero set
$u^{-1}(0)\subset\Sigma_H$ of
any such eigenfunction $u$ is invariant by rotations around $\{o\}\times\R$, and then
$u^{-1}(0)$ is a finite union of horizontal $(n-1)$-spheres. However, by Courant's nodal theorem,
$\Sigma-u^{-1}(0)$ has exactly two connected components, which implies that
$u^{-1}(0)$ consists of exactly one $(n-1)$-sphere. Since $u^{-1}(0)$ is also invariant
by reflection about $P_H,$ we conclude that it must coincide with $\Sigma_H\cap P_H$,
which contradicts that $0>\lambda_2$ is the first eigenvalue of $L$  for the Dirichlet problem
on the hemispheres $\Sigma_H^-$ and $\Sigma_H^+$.
Therefore, we have $\lambda_2=0,$
so that the condition {\bf A} in Koiso's Theorem is fulfilled.

Still following Souam, we fix $H^*>C_\epsilon$ and consider $\mathscr F_\epsilon$
as a one-parameter family $\Phi_H$ of CMC immersions  of $\Sigma_{H^*}$ into $\qr.$ Then, denoting
by $\xi$ the corresponding variational field on $\Sigma_{H^*},$
we have from  equality \eqref{eq-fundamental}
that the function $f:=\langle \xi,N\rangle$ satisfies  $Lf=1$
on $\Sigma_{H^*},$  since the parameter $t$ of this variation is $H$ itself. In particular,
for all $u\in\ker L,$ one has
\[
\int_{\Sigma_{H^*}}u=\int_{\Sigma_{H^*}}uLf=\int_{\Sigma_{H^*}}fLu=0,
\]
for $L$ is self-adjoint. Therefore, the condition in {\bf A}-(ii) of Koiso's Theorem
holds for the sphere $\Sigma_{H^*}.$

Now, set  $f=u+v,$ where $u\in\ker L$ and $v\in(\ker L)^\perp.$ Then,
$Lv=Lf=1$ and
\begin{equation} \label{eq-intf}
\int_{\Sigma_{H^*}}v=\int_{\Sigma_{H^*}}f.
\end{equation}

For $\epsilon=-1,$ we have from Theorem~\ref{th-nestedspheres} that the whole family
$\mathscr F_\epsilon$ is nested, which clearly implies that $f$ is nonnegative
and non identically zero on $\Sigma_{H^*}.$ Thus, considering \eqref{eq-intf},
we get from the final statement in {\bf A}-(ii) of Koiso's Theorem that all rotational CMC spheres
of $\h^n\times\R$ are stable. This proves part {\bf A} of our theorem.

Assume now that $\epsilon=1.$ Considering the area functional
$\mathcal A=\mathcal A(H)$ of the variation $\Phi_H$,
we have from~\eqref{eq-aprimeandvprime} and~\eqref{eq-intf} that
\[
\mathcal A'(H^*)=-\int_{\Sigma_{H^*}}H^*f=-H^*\int_{\Sigma_{H^*}}v.
\]
Hence, by assertion {\bf A}-(ii) of Koiso's Theorem, the rotational sphere
$\Sigma_{H^*}$ is stable if and only if $\mathcal A'(H^*)\le 0.$

It will be convenient to consider the correspondence~\eqref{eq-correspondence}
between the mean curvatures and
the radiuses of the spheres $\Sigma_H$ in $\mathscr F_\epsilon,$
and then write $\mathcal A$ as a function of the radius $s_0,$ that is,
\[
\mathcal A(s_0):={\rm Area}\,(\Sigma_{H(s_0)}), \,\,\, s_0\in (0,\pi).
\]
Then, since $H$ is a decreasing function of $s_0$, it follows from the  considerations
of the preceding paragraph that the following assertion holds.
\begin{claim} \label{claim-stability}
A rotational sphere $\Sigma_{H(s_0)}$ of $\mathscr F_\epsilon$ is stable if and only if  $\mathcal A'(s_0)\ge 0.$
\end{claim}

Taking Claim~\ref{claim-stability} into account, we shall
determine the constant $H_0$ of the statement of the theorem
by determining the maximum radius $s_0=s_0(H_0)$
such  that $\mathcal A'\ge 0$ in
$(0,s_0(H_0)].$ The constant $H_1>H_0,$ instead, will be determined
by considering Claim~\ref{claim-isoperimetric}, and by determining
the maximum radius $s_0=s_0(H_1)<s_0(H_0)$ for which
$\mathcal A$ is bounded above by $2\omega_n$ in $(0,s_0(H_1)]$
(see Fig.~\ref{fig-areagraph}).

\begin{figure}[htb]
\includegraphics[scale=.8]{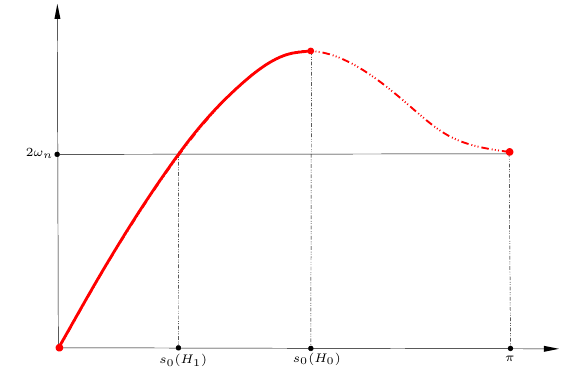}
\caption{\small An illustrative graph of the area function $\mathcal A=\mathcal A(s_0),$ considered in the proof of
Theorem~\ref{th-stabilityH-spheres}. By Claims~\ref{claim-isoperimetric} and~\ref{claim-stability}, and from
the growth of $\mathcal A,$ one has that all CMC spheres with radius in the interval $(0,s_0(H_0)]$ are stable, being
the ones with radius in $(0,s_0(H_1)]$ isoperimetric. The spheres with radius in $(s_0(H_1),s_0(H_0)]$
are all stable and non-isoperimetric.
For $n>2$, the growth behavior  of  $\mathcal A$ in the interval
$(s_0(H_0),\pi)$ is unknown. It is expected that, as in the case $n=2$,
$\mathcal A$ has only one critical point $s_0=s_0(H_0)$, in which case  it would be strictly decreasing in
$(s_0(H_0),\pi)$.}
\label{fig-areagraph}
\end{figure}

In order to  determine the constants $s_0(H_0)>s_0(H_1),$
we shall also consider the volume  $\mathcal V$ of the spherical regions
$\Omega_{H(s_0)}$ as a function of $s_0,$ that is,
$$\mathcal V(s_0):={\rm Vol}\,(\Omega_{H(s_0)}), \,\,\, s_0\in (0,\pi).$$
By Theorem~\ref{th-nestedspheres}, the spheres $\Sigma_{H(s_0)}$
converge to a double copy of the horizontal hyperplane
$\s^n\times\{0\}$ as $s_0\to\pi$. Therefore,
\[
\lim_{s_0\to\pi}\mathcal A(s_0)=2\omega_n \quad\text{and}\quad \lim_{s_0\to\pi}\mathcal V(s_0)=0.
\]

We have from the second of the above equalities  that $\mathcal V'(\bar s_0)<0$ for some
$\bar s_0>0$ sufficiently close to $\pi$.
In addition, since the spheres $\Sigma_{H(s_0)}$ are all CMC, we have (see, e.g.,~\cite[Lemma 5.2]{pedrosa})
\begin{equation} \label{eq-aprimevprimeagain}
\mathcal A'(s_0)=H(s_0)\mathcal V'(s_0) \,\,\, \forall s_0\in (0,\pi),
\end{equation}
which implies that $\mathcal A'(\bar s_0)<0.$ Hence, by Claim~\ref{claim-stability},
$\Sigma_{H(\bar s_0)}$ is unstable. In particular, $\mathcal A(\bar s_0)>2\omega_n.$
Otherwise, by Claim~\ref{claim-isoperimetric}, $\Sigma_{H(\bar s_0)}$ would be isoperimetric, and so it would
be stable. On the other hand, by Theorem~\ref{th-uniquenessPI}, the spheres
$\Sigma_{H(s_0)}$ are  isoperimetric for any sufficiently small $s_0>0$, and so they
are all stable. Hence, from Claims~\ref{claim-isoperimetric} and~\ref{claim-stability},  $\mathcal A(s_0)\le2\omega_n$
and $\mathcal A'(s_0)\ge 0$ for any  such  $s_0.$ Notice that, in this setting, $\mathcal A$ must increase from zero
to a value beyond $2\omega_n,$ since $\mathcal A(\bar s_0)>2\omega_n$ and, as we argued above,
there is no $s_0>0$ such that $\mathcal A'(s_0)<0$ and
$\mathcal A(s_0)\le2\omega_n$ (see Fig.~\ref{fig-areagraph}).

It follows from the above considerations that there exists a maximum $\tilde s_0\in(0,\pi)$ such that
$\mathcal A'\ge 0$ in $(0,\tilde s_0],$ and that $\tilde s_0$ satisfies $\mathcal A(\tilde s_0)>2\omega_n.$
Also, there exists a maximum $\hat s_0\in(0,\tilde s_0)$ such that $\mathcal A\le 2\omega_n$ in
$(0,\hat s_0].$ Defining $H_0<H_1$ by the equalities $s_0(H_0)=\tilde s_0$ and
$s_0(H_1)=\hat s_0,$ it follows  easily from Claims~\ref{claim-isoperimetric} and~\ref{claim-stability}
that $H_0$ and $H_1$ have the asserted properties.
This concludes our proof.
\end{proof}

\uniqueness*
\begin{proof}
Recalling~\cite[Theorem 2]{pedrosa}, any solution to the isoperimetric problem
in \,$\s^n\times\R$ is either a spherical region $\Omega_H$ or a cylindrical section $\Omega_{[a,b]}$.
With the notation of the proof of Theorem~\ref{th-stabilityH-spheres}-{\bf B}, we set
\[
\overbar{\mathcal V}:=\mathcal V(s_0(H_1))={\rm Vol}(\Omega_{H_1}).
\]

As we have seen, in the interval $I:=(0,s_0(H_1)),$
the area function $\mathcal A=\mathcal A(s_0)$ is bounded above by
$2\omega_n=\mathcal A(s_0(H_1))$ and satisfies
$\mathcal A'\ge 0$, so that $\mathcal A$ is increasing in $I$.
In particular, a point $s_0\in I$ satisfies
$\mathcal A(s_0)=2\omega_n$ if and only if
$\mathcal A$ is constant on $[s_0,s_0(H_1)]$.

By~\eqref{eq-aprimevprimeagain},
$\mathcal V'\ge 0$ on $I$.  Therefore, if
$\mathcal V_0<\overbar{\mathcal V}=\mathcal V(s_0(H_1)),$
there exists   $s_0\in I$ such that
$\mathcal V(s_0)=\mathcal V_0.$ For such an $s_0,$  $\mathcal A(s_0)\le 2\omega_n.$
In fact, $\mathcal A(s_0)<2\omega_n.$ If not, by the last statement of the
preceding paragraph, $\mathcal A$ would be constant on $[s_0,s_0(H_1)]$.
Again by~\eqref{eq-aprimevprimeagain}, the same would be true for $\mathcal V,$
which would contradict that $\mathcal V(s_0)=\mathcal V_0<\overbar{\mathcal V}$.
Therefore, from Claim~\ref{claim-isoperimetric},
no cylindrical section of $\s^n\times\R$ is a solution to the isoperimetric
problem for prescribed volume $\mathcal V_0$. This proves (i).

To prove (ii) and (iii), first notice that $\mathcal A$ cannot decrease below $2\omega_n$
in $(s_0(H_1),\pi).$ Otherwise, as we argued in the proof of Theorem~\ref{th-stabilityH-spheres},
there would exist an unstable isoperimetric CMC sphere in $\s^n\times\R.$ Then, we have
\begin{claim} \label{claim-last}
$\mathcal A(s_0)<2\omega_n$ if and only if $s_0\in I:=(0,s_0(H_1))$.
\end{claim}

Assume now that $\mathcal V_0>\overbar{\mathcal V}$.
Then, since $\mathcal V$ is increasing in $(0,s_0(H_1)],$ one has
\begin{equation} \label{eq-inequalitiesvzerov1}
\mathcal V(s_0)\le\mathcal V(s_0(H_1))=\overbar{\mathcal V}<\mathcal V_0\,\,\, \forall s_0\in (0,s_0(H_1)].
\end{equation}
Thus, the radius of a  spherical region $\Omega_{H(s_0)}$ of volume $\mathcal V_0,$ if any, is
in the interval $(s_0(H_1),\pi).$ In this case, it follows from Claim~\ref{claim-last}
that $\mathcal A(s_0)\ge 2\omega_n,$ which, together with Claim~\ref{claim-isoperimetric},
implies  that any cylindrical section $\Omega_{[a,b]}$ with $b-a=\mathcal V_0/\omega_n$ is
a solution to the isoperimetric problem for prescribed volume $\mathcal V_0.$
This proves (ii) and also (iii), since $\mathcal A(s_0(H_1))=2\omega_n.$

The first part of the final statement  follows from Claim~\ref{claim-isoperimetric} and
assertions (i)--(iii). For the second part,
we observe that, since  $\mathcal V$ is increasing an nonconstant in $I$,
given $\delta\in(0,\overbar{\mathcal V}),$
there exists $H_\delta>H_1$  such that $\mathcal V(s_0(H_\delta))=\overbar{\mathcal V}-\delta.$
Also, it is clear that $\mathcal V$ cannot be constant
in the interval $I_\delta:=(s_0(H_\delta),s_0(H_1))$,
since $\mathcal V(s_0(H_\delta))=\overbar{\mathcal V}-\delta<\mathcal V(s_0(H_1))$.
Hence, there exists $s_0\in I_\delta$ satisfying $\mathcal V'(s_0)>0,$
so that $\mathcal V$ is strictly increasing
in a neighborhood of $s_0.$ In particular, $\Omega_{H(s_0)}$
is the only solution to the isoperimetric
problem for prescribed volume
$\mathcal V_0:={\rm Vol}(\Omega_{H(s_0)})$.
This finishes the proof.
\end{proof}

\section{Acknowledgements}
We are indebted to R. Souam for many useful conversations and valuable suggestions,
specially the ones regarding the matter discussed in Remark~\ref{rem-flawed}.
We also thank  the anonymous referee
for the careful reading and all  useful comments and suggestions.
This study was partially supported by Coordenação de Aperfeiçoamento de Pessoal
de Nível Superior - Brasil (CAPES) - Finance Code 001,
INdAM-GNSAGA and PRIN-2022AP8HZ9.

\end{document}